\newcounter{main}
\newtheorem{theorem}{Theorem}[section]
\newtheorem{proposition}[theorem]{Proposition}
\newtheorem{lemma}[theorem]{Lemma}
\newtheorem{corollary}[theorem]{Corollary}
\newtheorem{definition}{Definition}[section]
\newcommand{\blanksquare}{\,\,\,$\sqcup\!\!\!\!\sqcap$}
\newenvironment{proof}{{\flushleft {\bf Proof: }}}{\blanksquare}
\newcounter{example}
{{\stepcounter{example}}{\flushleft {\bf Example \arabic{example}:}}}%
{\par}
\def\ds{\displaystyle}
\title{{\bf An ergodic theorem for non-invariant measures }}
\author{ Maria Carvalho and Fernando Moreira \thanks{Partially supported by FCT through
CMUP }}
\date{ }
\begin{document}

\maketitle

\begin{abstract}
Given a space $X$, a $\sigma$-algebra $\mathfrak{B}$ on $X$ and a measurable map $T:X \rightarrow X$, we say that a measure $\mu$ is \emph{half-invariant} if, for any $B \in \mathfrak{B}$, we have $\mu(T^{-1}(B)\leq \mu (B)$. In this note we present a generalization of Birkhoff's Ergodic theorem to $\sigma$-finite half-invariant measures.
\end{abstract}

\footnotesize
\noindent\emph{MSC 2000:} primary 37A30; secondary 28D99.\\
\emph{keywords:} Birkhoff's averages; pointwise convergence \\
\normalsize

\begin{section}{Introduction}
In the study of properties of systems that hold on big sets of the domain, it is useful to connect dynamics with measures. The classical and most powerful relation between them is the notion of \emph{invariance}, meaning that each measurable set has the same measure as its pre-image by the dynamics. The importance of invariant probabilities may be attested by Poincar\'e's recurrence theorem which ensures that the existence of a finite invariant measure replaces, for certain purposes, the compactness of the domain. In particular, if $\mu$ is a $T$-invariant probability and $A$ a measurable set, then $\mu$-almost every point $a$ in $A$ is recurrent. This result is not valid for infinite measures and gives no clue about the frequency of visits to $A$ or how that rate changes with the starting point $a$ or the set $A$. This is done by Birkhoff's ergodic theorem.

\begin{theorem}\cite{Ha}
Consider a $\sigma$-finite measure $\mu$, a measurable space $(X,\mathfrak{B},\mu)$, a measu\-rable map $T:X \rightarrow X$ and a function $f:X \rightarrow \mathbb{R}$ in $\mathcal{L}^1(X,\mu)$. Then:
\begin{itemize}
\item[(a)] The limit $f^\ast(x) = \underset{n\rightarrow{\infty}}{\text{lim}} \, \frac{1}{n} \ds\sum_{j=0}^{n-1} f(T^j(x))$ exists for $\mu$ almost every point $x$.
\item[(b)] The function $f^\ast$ is $\mu$-integrable and $T$-invariant.
\item[(c)] If $\mu(X)<+\infty$, then $\ds\int \, f^\ast d\mu=\ds\int \, f d\mu$.
\item[(d)] If $0<\mu(X)<+\infty$ and $\mu$ is ergodic, then $f^\ast(x)=\frac{1}{\mu(X)}\ds\int \, f d\mu$ for $\mu$ almost every $x$.
\end{itemize}
\end{theorem}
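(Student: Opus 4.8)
The plan is to reduce everything to Hopf's maximal ergodic theorem, which I would establish first via Garsia's short argument. Writing $S_n f = \sum_{j=0}^{n-1} f \circ T^j$ and $M_N f = \max_{0 \le n \le N} S_n f \ge 0$, the pointwise inequality $M_N f(x) \le f(x) + M_N f(Tx)$ holds wherever $M_N f(x) > 0$; integrating it over $A = \{M_N f > 0\}$ and using $T$-invariance of $\mu$ (the hypothesis underlying this classical statement, i.e.\ $\mu(T^{-1}B)=\mu(B)$) to discard the $M_N f \circ T$ term yields $\int_{A} f \, d\mu \ge 0$. Since $0 \le M_N f \le \sum_{j=0}^{N-1} |f|\circ T^j \in \mathcal{L}^1$, every integral involved is finite and the manipulation is legitimate. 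Letting $N \to \infty$ gives the maximal inequality $\int_{B} f \, d\mu \ge 0$, where $B = \{\sup_n S_n f > 0\}$.

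For part (a), set $\bar f = \limsup_n \tfrac1n S_n f$ and $\underline f = \liminf_n \tfrac1n S_n f$. The identity $\tfrac1n S_n f \circ T = \tfrac{n+1}{n}\cdot\tfrac{1}{n+1} S_{n+1} f - \tfrac1n f$ shows both are $T$-invariant. For rationals $\beta < \alpha$ I would consider the invariant set $E = \{\underline f < \beta < \alpha < \bar f\}$ and apply the maximal inequality to $(f - \alpha)\mathbf{1}_E$ and to $(\beta - f)\mathbf{1}_E$. Because $f \in \mathcal{L}^1$, the quantity $\int_E f \, d\mu$ is automatically finite. The delicate point, which I expect to be the main obstacle, is that $\mu(E)$ may a priori be infinite while the constants $\alpha,\beta$ are not integrable over an infinite-measure set; I would therefore intersect $E$ with finite-measure sets supplied by $\sigma$-finiteness and exhaust $E$ from within, rather than integrating $\mathbf{1}_E$ directly. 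Once the inequalities $\alpha\,\mu(E) \le \int_E f \,d\mu \le \beta\,\mu(E)$ are justified, finiteness of $\int_E f\,d\mu$ together with $\alpha>\beta$ (so $\alpha,\beta$ are not both zero) forces $\mu(E) < \infty$, and then $\alpha > \beta$ forces $\mu(E) = 0$. Taking the countable union over all rational pairs gives $\bar f = \underline f$ almost everywhere, so the Birkhoff average converges a.e.\ to a value $f^\ast$.

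For part (b), $T$-invariance of $f^\ast$ is inherited from $\bar f$ and $\underline f$, while integrability follows from Fatou's lemma, $\int |f^\ast|\,d\mu \le \liminf_n \tfrac1n \int |S_n f|\,d\mu \le \int |f|\,d\mu < \infty$, which also shows $f^\ast$ is finite a.e. For part (c), assuming $\mu(X) < \infty$, I would upgrade a.e.\ convergence to $\mathcal{L}^1$ convergence: truncating $f$ and using the maximal function as an integrable dominating control (a Vitali, uniform-integrability argument) lets me pass the identity $\int \tfrac1n S_n f\,d\mu = \int f\,d\mu$, valid by invariance, to the limit and conclude $\int f^\ast \,d\mu = \int f \,d\mu$. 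Finally, part (d) is immediate: when $\mu$ is ergodic the invariant function $f^\ast$ is constant $\mu$-a.e., and the identity from (c), namely $c\,\mu(X) = \int f\,d\mu$, pins the constant to $c = \tfrac{1}{\mu(X)}\int f \, d\mu$.

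Throughout, the recurring difficulty is the $\sigma$-finiteness: both the maximal-inequality step and the restriction to the invariant sets $E$ must be arranged so that no non-integrable constant is ever integrated against an infinite-measure set, which I would handle by always testing against $\mathcal{L}^1$ functions and approximating infinite-measure invariant sets by finite-measure pieces.
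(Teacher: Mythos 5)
Your argument is the classical Hopf--Garsia proof of Birkhoff's theorem and, modulo routine details, it is correct; but note that the paper does not prove this statement at all --- it is quoted from Halmos as background --- so the only meaningful comparison is with the paper's proof of its generalization to half-invariant measures (Theorem 1.2). There the authors deliberately split into two cases: for $\mu(X)<+\infty$ they follow the Kamae/Katznelson--Weiss argument (the stopping time $n(x)$, the tail sets $A_{\varepsilon,k}$, the modified functions $\tilde f_{N_\varepsilon}$ and $\check f_{J_\varepsilon}$), and only for $\mu(X)=+\infty$ do they invoke Hopf's maximal ergodic theorem on the invariant sets $Y_{\alpha,\beta}$ exhausted by finite-measure pieces --- exactly the device you propose for your sets $E$. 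Your route treats finite and infinite measure uniformly, which is cleaner for the classical invariant case; the paper's reason for preferring the Kamae route in the finite case is that it uses invariance of $\mu$ in essentially one place (which is what makes the relaxation to half-invariance possible), whereas Garsia's integration step and the $\mathcal{L}^1$-convergence argument in your part (c) each consume the full identity $\int g\circ T\,d\mu=\int g\,d\mu$. Two small points to tighten: in $E=\{\underline f<\beta<\alpha<\bar f\}$ the two halves of $\alpha\,\mu(E)\le\int_E f\,d\mu\le\beta\,\mu(E)$ need the constants to have usable signs --- since $\beta<\alpha$, at least one of $\alpha>0$ or $\beta<0$ holds, and that one must be exploited first to conclude $\mu(E)<+\infty$, after which $\chi_E$ is integrable and the other half follows; and in (c) the maximal function of an $\mathcal{L}^1$ function is in general not integrable (it is only of weak type $(1,1)$), so the dominating control should be the truncation estimate $\left\|\frac1n S_n(f-f_M)\right\|_1\le\|f-f_M\|_1$ rather than the maximal function itself.
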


Yet, there are dynamics without invariant probabilities. The possibility of generalizing this result lies on what we request about one or several of the ingredients it concerns, namely the dynamics, the measure or the test function. For instance, extensions have been achieved demanding more from the dynamics $T$ (like Halmos's Ergodic theo\-rem in \cite{H} and Halmos's Random Ergodic theorem in \cite{Ha}) or weighing the values of the test function along the orbits (as done by Khintchine in \cite{Kh} and by Wiener \& Wintner in \cite{WW}) or both (as Hurewicz did in \cite{Hu}). The condition of measure invariance plays an essential role on the proof of Birkhoff's theorem and it is a delicate matter to extend this result to a broader set of measures.

We find in the literature several examples of dynamical systems and test functions with time averages failing to converge almost everywhere (\cite{LS,T,G}). For instance, as in \cite{LS}, we may consider a uniquely ergodic homeomorphism $T:X \rightarrow X$ on a compact metric space, a point $p \in X$ and a real valued continuous map $g$ whose sequence of time averages along the orbit of $p$ with respect to $T^2$ does not converge; therefore, if $\eta=\frac{1}{2}(\delta_{p}+\delta_{T(p)})$, then $\eta(\{p\})>0$, the sequence
$$\frac{1}{n}\ds\sum_{j=0}^{n-1}\ds\int g((T^{2})^{j})\, d\eta = \frac{1}{2}\left[\frac{1}{n}\ds\sum_{j=0}^{2n-1} g(T^{j})(p)\right]$$
is convergent (by the unique ergodicity of $T$), but $\left(\frac{1}{n}\ds\sum_{j=0}^{n-1} g((T^{2})^{j})(p)\right)_{n \in \mathbb{N}}$ diverges. Besides, even if the sequence of time averages converges almost everywhere, the set of points where convergence fails may be topologically significant: if $T: X \rightarrow X$ is the Anosov diffeomorphism on the 2-torus determined by the matrix
$\left( \begin{array}{cc}
2 & 1 \\
1 & 1
\end{array}
\right)$
and $f$ is a real valued non-constant continuous map, then the sequence $\left(\frac{1}{n}\ds\sum_{j=0}^{n-1} f(T^j(x))\right)_{n \in \mathbb{N}}$ converges in a set $B$ with Lebesgue measure one, but its complement is residual \cite{M}. So, if we take any point $a$ in $X-B$ and the Dirac measure supported on the orbit $a$, then the limit of the Birkhoff averages of $f$ does not exist almost everywhere.

Thus, to generalize Birkhoff's ergodic theorem to non-invariant measures, we need some extra condition on the measure. The first promising hint was the \emph{quasi-invariance}. In a measurable space $(X,\mathfrak{B})$, a measure $\mu$ is \emph{quasi-invariant} by a measurable transformation $T:X \rightarrow X$ if
\begin{equation*}
\forall B \in \mathfrak{B}\,\,\,\,\,\,\,\mu(B)=0 \Leftrightarrow \mu(T^{-1}(B))=0.
\end{equation*}
For example, given $T:X \rightarrow X$ and $a \in X$, the probability $\eta_a$, supported on the orbit of $a$, which assigns to each measurable set $A$ the sum $\ds\sum_{n \in \mathbb{N}: \,\,T^n(a) \in A} \, \, \frac{1}{2^{n+1}}$, is quasi-invariant and ergodic (but in general is not $T$-invariant, as happens with $T(x)=\frac{x}{2}$, $0\leq x \leq 1$, and $a=1$). Quasi-invariance is a relevant property: an ergodic decomposition is available for these probabilities and, moreover, it had been proven that:
\begin{itemize}
 \item[] \emph{Every uniquely ergodic homeomorphism of a compact metric space whose invariant measure is non-atomic possesses uncountably many inequivalent non-atomic quasi-invariant ergodic measures.} \cite{Kr}
\item[] \emph{A necessary and sufficient condition for a homeomorphism of a compact metric space to have uncountably many inequivalent non-atomic quasi-invariant measures is that the dynamics has a point that returns infinitely often to any of its deleted neighborhoods.} \cite{KW}
\end{itemize}
Nevertheless there is no hope to extend Birkhoff's ergodic theorem (or Poincar\'e's result) to this setting. Let us go back to the previous example,
$$T(x)=\frac{x}{2}, \,\, 0\leq x \leq 1; \,\,\,\, a=1; \,\,\,\, \eta_a$$
and take a sequence $(\lambda_k)_{k \in \mathbb{N}_{0}}$ of zeros and ones whose averages $\left(\frac{1}{n}\ds\sum_{j=0}^{n-1} \, \lambda_j\right)_{n \in \mathbb{N}}$ do not converge. Then the test map $f$ defined as
$$f(x)=\left\{ \begin{array}{cc}
0 & \text{ if } x \text{ does not belong to the orbit of } 1 \\
\lambda_n & \text{ if } x=T^n(1)
\end{array}
\right.
.$$
verifies:
\begin{itemize}
\item $f \in \mathcal{L}^1(X,\mu)$, since $\ds\int \, f d\mu=\ds\sum_{n \geq 0} \, \frac{f(T^n(1))}{2^{n+1}} \leq 1$;
\item $\left(\frac{1}{n}\ds\sum_{j=0}^{n-1} \, f(T^j(1))\right)_{n \in \mathbb{N}}=\left(\frac{1}{n}\ds\sum_{j=0}^{n-1} \, \lambda_j\right)_{n \in \mathbb{N}}$ has no limit;
\item $\mu(\{1\})=\frac{1}{2}$;
\item the set $\{1\}$ has no recurrent point.
\end{itemize}
So we turned to a slightly stronger concept.

\begin{definition}
A measure $\mu$ on a $\sigma$-algebra $\mathfrak{B}$ is \emph{half-invariant} by $T$ if
$$\forall \,B \in \mathfrak{B} \,\,\, \, \mu(T^{-1}(B)) \leq \mu(B).$$
\end{definition}

\noindent Invariant measures are half and quasi-invariant, and half-invariance implies quasi-invariance, but the converse is not true. For instance, if $X=\mathbb{R}$ and $T: \mathbb{R} \rightarrow \mathbb{R}$, $T(x)=2x$, then $\mu= \text{\emph{Lebesgue measure}}$ is half-invariant but not invariant: for any measurable $B$, we have $\mu\left(T^{-1}(B)\right)=\frac{\mu(B)}{2}$.

\medskip

There are a few reasons to choose this concept:
\begin{enumerate}
\item We may find examples.

$(i)$ For linear maps in $\mathbb{R}^n$ with determinant bigger than $1$, the \emph{Lebesgue measure} is half-invariant.

\medskip

$(2i)$ If $T:\mathbb{Z} \rightarrow \mathbb{Z}$ is the map $T(n)=n-1$ and $\mathfrak{B}=\{\text{subsets of }\mathbb{Z}\}$, then the\emph{ weighted counting measure} $\nu$ given by $\nu(A)=\ds\sum_{n \in A}\, \frac{1}{2^n}$ is half-invariant. [Notice that its finite counterpart, the probability $\eta$ defined as
$\eta(A)=\ds\sum_{n \in A}\, \frac{1}{3\times 2^{|n|}}$ is quasi-invariant but is not half-invariant.]

\item They seem to be naturally associated with \emph{sub-Markov} or Frobenius operators, \emph{random diffusion} equations and other relevant subjects, as suggested by \cite{Ru}.
\item It is a spectral property.

A measure $\mu$ is half-invariant by $T$ if and only if the operator $\mathcal{U}_T:\mathcal{L}^1(X,\mu) \rightarrow \mathcal{L}^1(X,\mu)$, that assigns to each $f$ the composition $f\circ T$, is a contraction on $\mathcal{L}^1(X,\mu)$. That is, $\mu$ is half-invariant by $T$ if and only if, for any non-negative $f$, we have $\ds\int \, \mathcal{U}_T(f) \, d\mu \leq \ds\int \, f \, d\mu$.
\end{enumerate}

\bigskip

Now, for a probability $\mu$ and a positive contraction $\mathcal{U}$ on $\mathcal{L}^1(X,\mu)$, it was already known \cite{LS,Hg} that the two following assertions are equivalent:
\begin{enumerate}
\item[I.] \emph{For each $f$ in $\mathcal{L}^{\infty}$, the sequence $\left(\frac{1}{n}\ds\sum_{j=0}^{n-1} \mathcal{U}^j(f)\right)_{n \in \mathbb{N}}$ converges $\mu$ almost everywhere.}
\item[II.] \emph{For each $f$ in $\mathcal{L}^{\infty}$, the sequence $\left(\frac{1}{n}\ds\sum_{j=0}^{n-1} \ds\int \, \mathcal{U}^j(f)\, d\mu(x) \right)_{n \in \mathbb{N}}$ converges.}
\end{enumerate}
Obviously, if $\mathcal{U}(f)=f \circ T$ and $\mu$ is $T$-invariant, (I) and (II) are not only equivalent but both valid, and the limit in (II) is $\ds\int \, f \, d\mu$.
Moreover, (II) is a consequence of (I) for each fixed $f$, by the Dominated Convergence theorem. The converse is not so straightforward and needs the full extent of the hypothesis, that is, that (II) holds for any $f$. Chacon described in \cite{C} an example of a positive contraction $\mathcal{U}$ on some $\mathcal{L}^1(X,\mu)$ and a positive bounded and integrable function $f$ such that
$$\left(\frac{1}{n}\ds\sum_{j=0}^{n-1} \ds\int \, \mathcal{U}^j(f)\, d\mu(x) \right)_{n \in \mathbb{N}}=\left(\ds\int \, f \, d\mu(x) \right)_{n \in \mathbb{N}}$$
thus condition (II) holds, but the sequence $\left(\frac{1}{n}\ds\sum_{j=0}^{n-1} \mathcal{U}^j(f)(x)\right)_{n \in \mathbb{N}}$ fails to converge on a full Lebesgue measure set since, for almost every $x$,
\begin{eqnarray*}
\liminf_{n \rightarrow +\infty} \, \frac{1}{n}\ds\sum_{j=0}^{n-1} \mathcal{U}^j(f)(x)&=&0 \\
\limsup_{n \rightarrow +\infty} \, \frac{1}{n}\ds\sum_{j=0}^{n-1} \mathcal{U}^j(f)(x)&=&+ \infty.
\end{eqnarray*}
Therefore we restricted the study to operators associated to a dynamical system, that is, those defined as $\mathcal{U}(f)=f \circ T$, for some map $T:X \rightarrow X$.

\medskip

\begin{theorem}\label{maintheorem}
Let $(X,\mathfrak{B},\mu)$ be a measure space, $T:X \rightarrow X$ a measurable transformation and assume that $\mu$ is
a $\sigma$-finite measure half-invariant by $T$. Then, for any non-negative $f \in \mathcal{L}^1(X,\mu)$, we have:
\begin{itemize}
\item[(a)] The limit $f^\ast(x) = \underset{n\rightarrow{\infty}}{\text{lim}} \, \frac{1}{n} \ds\sum_{j=0}^{n-1} f (T^j(x))$ exists for $\mu$ almost every point $x$.
\item[(b)] The function $f^\ast$ is $\mu$-integrable and $T$-invariant.
\item[(c)] $\ds\int f^{*}\, d\mu \leq \underset{n\rightarrow \infty}{\liminf}\,\frac{1}{n}\ds\sum_{j=0}^{n-1} \ds\int f \circ T^j \, d\mu \leq \ds\int f\, d\mu.$
\item[(d)] If $\mu(X)<+\infty$, then $\ds\int f^{*}\, d\mu = \underset{n\rightarrow \infty}{\lim}\,\frac{1}{n}\ds\sum_{j=0}^{n-1} \ds\int f \circ T^j \, d\mu = \ds\int f\, d\mu.$
\item[(e)] If $\mu(X)<+\infty$, then $\mu$ is $T$-invariant.
\end{itemize}
\end{theorem}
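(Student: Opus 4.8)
The plan is to split the five assertions by difficulty, disposing of (e), the integral estimates, and (d) quickly, and reserving the pointwise statement (a) for last. Throughout I write $\mathcal{U}=\mathcal{U}_T$ and $S_nf=\sum_{j=0}^{n-1}f\circ T^j$, and I use repeatedly that half-invariance is equivalent to $\mathcal{U}$ being a positive contraction of $\mathcal{L}^1(\mu)$ (point 3 of the Introduction); I also note that $\mathcal{U}$ is \emph{automatically} a contraction of $\mathcal{L}^\infty(\mu)$, since $\|f\circ T\|_\infty\le\|f\|_\infty$. For (e), assuming $\mu(X)<+\infty$, I would apply half-invariance to a set $B$ and to its complement: from $\mu(T^{-1}B)\le\mu(B)$, $\mu(T^{-1}(B^c))\le\mu(B^c)$, the identity $T^{-1}(B^c)=(T^{-1}B)^c$, and finiteness of $\mu(X)$ one gets $\mu(T^{-1}B)\ge\mu(B)$, hence equality; so finite half-invariant measures are invariant. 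For the integral estimates I first record that $j\mapsto\int f\circ T^j\,d\mu$ is non-increasing, since $\int f\circ T^{j+1}\,d\mu=\int\mathcal{U}(f\circ T^{j})\,d\mu\le\int f\circ T^{j}\,d\mu$ by half-invariance applied to the non-negative $f\circ T^j$. Being non-negative and non-increasing it converges to some $I\le\int f\,d\mu$, and its Cesàro means converge to the same $I$; this yields the right-hand inequality of (c) and shows the $\liminf$ there is a limit equal to $I$. For (d), once (e) is known the measure is invariant, so $\int f\circ T^j\,d\mu=\int f\,d\mu$ for every $j$, forcing $I=\int f\,d\mu$, and the identity $\int f^\ast\,d\mu=\int f\,d\mu$ is then exactly the finite-measure Birkhoff theorem quoted in the Introduction.

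The core is (a), and its first ingredient is the lemma that $\tfrac1n f(T^n x)\to 0$ for $\mu$-almost every $x$. Fixing $\varepsilon>0$ and writing $\{f\circ T^n>\varepsilon n\}=T^{-n}\{f>\varepsilon n\}$, iterated half-invariance ($\mu(T^{-n}A)\le\mu(A)$) gives $\mu(f\circ T^n>\varepsilon n)\le\mu(f>\varepsilon n)$, while $\sum_n\mu(f>\varepsilon n)\le\tfrac1\varepsilon\int f\,d\mu<+\infty$; Borel--Cantelli then gives the claim. Since $\tfrac1n S_nf(Tx)-\tfrac1n S_nf(x)=\tfrac1n\bigl(f(T^nx)-f(x)\bigr)\to 0$ almost everywhere, the functions $f^\ast:=\limsup_n\tfrac1n S_nf$ and $f_\ast:=\liminf_n\tfrac1n S_nf$ are $T$-invariant modulo $\mu$-null sets, and hence so are the level sets $E_{a,b}:=\{f_\ast<a<b<f^\ast\}$ for rationals $0\le a<b$. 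By Fatou's lemma, $\int f_\ast\,d\mu\le\liminf_n\tfrac1n\sum_{j<n}\int f\circ T^j\,d\mu=I<+\infty$, so $f_\ast\in\mathcal{L}^1(\mu)$.

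It remains to prove that each $E_{a,b}$ is $\mu$-null, and this is where I expect the main obstacle. I would invoke the maximal ergodic theorem for the positive contraction $\mathcal{U}$, but the decisive difficulty is that the argument must be run for a $\sigma$-finite, possibly infinite, measure, on invariant level sets where the very constants $a,b$ one wishes to subtract need not be integrable. My intention is to resolve this exactly as in the Dunford--Schwartz ergodic theorem: because $\mathcal{U}$ contracts both $\mathcal{L}^1$ and $\mathcal{L}^\infty$, Hopf's maximal inequality upgrades to the weak-type bound $\mu\bigl(\sup_n\tfrac1n S_nf>b\bigr)\le\tfrac1b\int f\,d\mu$, which simultaneously forces $\mu(E_{a,b})<+\infty$ and, together with almost-everywhere convergence on the dense class of bounded functions (Banach's principle), yields almost-everywhere convergence of $\tfrac1n S_nf$ for every $f\in\mathcal{L}^1(\mu)$. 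It is precisely the $\mathcal{L}^\infty$-contraction property, automatic for the composition operator but false for the general positive contraction in Chacon's example, that makes this maximal inequality available; this is the conceptual reason the averages converge here while they diverge there. Once (a) is secured, $f^\ast=f_\ast\in\mathcal{L}^1(\mu)$ gives the integrability and $T$-invariance of (b), and Fatou's lemma applied to $\tfrac1n S_nf\ge 0$ gives $\int f^\ast\,d\mu\le I$, the remaining left-hand inequality of (c).
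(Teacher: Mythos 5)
Your handling of (b)--(e) and of both inequalities in (c) is correct and, where it overlaps, matches the paper: the complementation argument for (e) is exactly the paper's, the right-hand inequality in (c) is termwise half-invariance (your observation that $j\mapsto\int f\circ T^j\,d\mu$ is non-increasing is a nice refinement), the left-hand one is Fatou, and deducing (d) from (e) together with the classical finite-measure Birkhoff theorem is a legitimate shortcut the paper does not take (it extracts (d) from the two integral estimates its proof of (a) produces along the way). The weak-type bound $\mu\bigl(\sup_n\frac1n S_nf>b\bigr)\le\frac1b\int f\,d\mu$ that you assert is also true, and is obtained exactly as in Proposition \ref{subsets}(i): apply Hopf's maximal lemma (Lemma \ref{MET}) to $f-b\chi_C$ for finite-measure subsets $C$ of the level set and exhaust using $\sigma$-finiteness. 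You state rather than derive it, but that part is routine.

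The genuine gap is at the core of (a). Banach's principle converts a weak-type maximal inequality plus almost-everywhere convergence on a \emph{dense subclass} of $\mathcal{L}^1$ into convergence for all of $\mathcal{L}^1$; the subclass you name, the bounded functions, is precisely the class for which convergence is the open question. When $\mu$ is finite every bounded measurable function is integrable, so ``convergence for bounded $f$'' is not an input to the theorem --- it \emph{is} the theorem --- and your argument is circular exactly where all the work lies. The paper's entire Subsection 2.1 (the Kamae/Katznelson--Weiss stopping-time argument with the tail sets $A_{\varepsilon,k}$, $C_{\varepsilon,k}$ and the modified functions $\tilde f_{N_\varepsilon}$, $\check f_{J_\varepsilon}$, with half-invariance entering through Lemma \ref{integrais}) exists to prove convergence for bounded $f$ and finite $\mu$, and Subsection 2.2 then reduces the $\sigma$-finite case to it by restricting to the invariant sets $Y_{\alpha,\beta}$, shown to have finite measure by the very maximal inequality you invoke. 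To repair your proof you must either supply such an argument, or exhibit a genuinely tractable dense class (fixed functions plus coboundaries $h-h\circ T$ with $h\in\mathcal{L}^1\cap\mathcal{L}^\infty$, whose density would itself need the mean ergodic theorem for the $\mathcal{L}^2$-contraction obtained by interpolation), or cite the Hopf/Dunford--Schwartz theorem for operators contracting both $\mathcal{L}^1$ and $\mathcal{L}^\infty$ outright, of which Theorem \ref{maintheorem}(a) is indeed a special case. In fact the cleanest repair is already in your own hands: your sets $E_{a,b}$ are $T$-invariant with finite measure, so the restriction of $\mu$ to $E_{a,b}$ is a finite half-invariant measure, hence $T$-invariant by your part (e), and the classical Birkhoff theorem applied to that restriction forces $\mu(E_{a,b})=0$; your remark that the $\mathcal{L}^\infty$-contraction property is what separates this setting from Chacon's counterexample correctly identifies the relevant hypothesis but does not by itself supply this missing step.
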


\bigskip

Three comments before starting the proof:

\begin{itemize}
\item[1.] When $\mu$ is finite, the statement $(a)$ is a particular case of Chacon-Ornstein theorem \cite{CO}\footnote{
\emph{If $(X,\mathfrak{B}, \mu)$ is $\sigma$-finite, $0\leq f,g \in \mathcal{L}^1(X,\mu)$ and $\mathcal{U}:\mathcal{L}^1(X,\mu) \rightarrow \mathcal{L}^1(X,\mu)$ is a positive contraction, then the sequence $\left(\frac{\ds\sum_{j=0}^{n-1} \mathcal{U}^j(f)}{\ds\sum
_{j=0}^{n-1} \mathcal{U}^j(g)}\right)_{\mathbb{N}}$ converges to a finite limit $\mu$ almost everywhere on the set $\mathcal{C}=\{x \in X:\ds\sum_{j=0}^{n-1} \mathcal{U}^j(g)>0\}$}.}. But, in general, we are not allowed to choose $g \equiv 1$ on this statement.
\item[2.] If, besides being half-invariant, $\mu$ is ergodic, then $f^{*}$ is constant, given by the limit $\underset{n\rightarrow \infty}{\lim}\,\frac{1}{n}\ds\sum_{j=0}^{n-1} \ds\int f \circ T^j\, d\mu$, which is less or equal to $\ds\int f\, d\mu$. Ergodicity is not interesting when $\mu(X)=+\infty$, since then this limit is always zero because $f^{*}$ is $\mu$ integrable.
\item[3.] The loss in (c) is expected and not only due to the lack of invariance, but also a consequence of the non-finiteness of the measure. For instance, consider $S(x)=x+1$, $T(x)=2x$, for $x \in \mathbb{R}$, $\mu$ the Lebesgue measure (which is $S$-invariant and $T$-half-invariant) and $f=\chi_{|_{[1,2[}}$. Then, as the orbit by $T$ or $S$ of each $x \in \mathbb{R}$ crosses $[1,2[$ at most once, $f^* \equiv 0$ for both dynamics. As $T^{-j}([1,2[)=[\frac{1}{2^j},\frac{1}{2^{j-1}}[$, for all $j\geq1$, the sequence $\left(\ds\int \,f \circ T^j\, d\mu\right)_{n \in \mathbb{N}}$ has limit zero, and so does $\left(\frac{1}{n}\ds\sum_{j=0}^{n-1} \ds\int f \circ T^j\, d\mu\right)_{n \in \mathbb{N}}$. Hence
$$\ds\int f^{*}\, d\mu = 0 = \underset{n\rightarrow \infty}{\lim}\,\frac{1}{n}\ds\sum_{j=0}^{n-1} \ds\int f \circ T^j\, d\mu < \ds\int f\, d\mu=1.$$
Concerning $S$, we have $\ds\int \,f \circ S^j\, d\mu=\ds\int \,f \, d\mu=1$, for all $j\geq1$, and so
$$\ds\int f^{*}\, d\mu=0<\underset{n\rightarrow \infty}{\lim}\,\frac{1}{n}\ds\sum_{j=0}^{n-1} \ds\int f \circ S^j\, d\mu = \ds\int\, f\, d\mu=1.$$
\end{itemize}
\bigskip

To prove that the half-invariance is enough to ensure the convergence $\mu$ almost everywhere of the Birkhoff averages of $f$, we essentially had two different approaches available and looked for the one which used in less instances the invariance of the measure. To our knowledge, the simplest proof of Birkhoff's theorem is due to T. Kamae \cite{K}, where invariance only intervenes once. Yet, the argument is only valid for finite measures. The classical reasoning due to Riesz uses the invariance of the measure in several steps, but, through a Maximal Ergodic theorem, avoids the constraint of the finiteness. Our argument mixes both strategies, employing a version of the Maximal Ergodic theorem for half-invariant measures.
\end{section}

\begin{section}{Proof}
The first part of this section differs from \cite{KW2}, which is the reformulation of Kamae's proof in standard language, in a few details only; we will emphasize the differences.

First notice that it is enough to verify the pointwise convergence for a non-negative test function $f$ in $\mathcal{L}^\infty$. Otherwise, we take $f^+=\max \{f,0\}$ and $f^-=\max \{-f,0\}$ and, if $f$ belongs to $\mathcal{L}^1(X,\mu)$, we approach $f$ by the bounded maps $f_M=\min\,\{f,M\}$, where $M>0$, apply the argument to each $f_M$, let $M$ go to $+\infty$ and then use the Monotone Convergence theorem.

Consider a $\sigma$-finite measure $\mu$ half-invariant by $T$, a non-negative $f$ in $\mathcal{L}^\infty$ and the maps
$$\overline{f}(x)=\underset{n\rightarrow \infty}{\limsup}\,\frac{1}{n}\ds\sum_{j=0}^{n-1} f \circ T^j$$
$$\underline{f}(x)=\underset{n\rightarrow \infty}{\liminf}\,\frac{1}{n}\ds\sum_{j=0}^{n-1} f \circ T^j.$$
These are measurable $T$-invariant functions since
$$\frac{1}{n}\ds\sum_{j=0}^{n-1} f \circ T^{j+1}=\frac{n+1}{n}.\frac{1}{n+1}\ds\sum_{j=0}^{n} f \circ T^j-\frac{1}{n}f$$
and, by Fatou Lemma and half-invariance, $\mu$-integrable as well, with
$$0\leq \ds\int \, \underline{f} \, \leq \, \ds\int \, \overline{f} \, \leq \, \ds\int \, f.$$
We will prove that $\mu$-almost everywhere $\overline{f}=\underline{f}$. The common value of these two functions defines a map $f^{*}$ which is $T$-invariant and belongs to $\mathcal{L}^1(\mu)$: as $\mu$ is half-invariant, we have
\begin{eqnarray*}
\ds\int \, \left|\frac{1}{n}\ds\sum_{j=0}^{n-1} f \circ T^j\right| \, d\mu & \leq & \frac{1}{n}\ds\sum_{j=0}^{n-1} \ds\int \, |f|(T^j) \, d\mu \\
& \leq &  \frac{1}{n}\ds\sum_{j=0}^{n-1} \ds\int \, |f| \, d\mu \\
&=& \ds\int \, |f| \, d\mu < +\infty
\end{eqnarray*}
and so we may use the Dominated Convergence theorem. We are now due to prove assertions $(a)$, $(c)$, $(d)$ and $(e)$.

\medskip

\begin{subsection}{First case: $\mu(X)<+\infty$}
\begin{proof}

\noindent (a) Fix a non-negative $f \in \mathcal{L}^\infty$, a real $\varepsilon>0$ and $M=\sup \,\{f(x): x \in X\}$. If $M=0$, then $f^{*}=f\equiv 0$ and the proof ends. Otherwise, for each $x \in X$, take
$$n(x)=\min\,\{k \in \mathbb{N}: \overline{f}(x)\leq \frac{1}{k}\ds\sum_{j=0}^{k-1} f(T^j(x)) + \varepsilon\}.$$
Thus, as $\overline{f}\circ T=\overline{f}$,
$$\ds\sum_{j=0}^{n(x)-1} \overline{f}(T^j(x)) \leq \ds\sum_{j=0}^{n(x)-1} f(T^j(x))+n(x)\varepsilon.$$
The main problem concerning this estimate is the set of points for which the convergence is too slow. It is the union, for big $k$, of the \emph{tail-sets}
$$A_{\varepsilon,k}=\{ x \in X: n(x)>k \}.$$
However, these are measurable and

\begin{lemma}
We may find a positive integer $N_\varepsilon$ such that $\mu(A_{\varepsilon,N_\varepsilon})<\frac{\varepsilon}{M}.$
\end{lemma}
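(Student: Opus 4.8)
The plan is to reduce the lemma to a continuity-of-measure argument, exploiting crucially that we are in the finite-measure case $\mu(X)<+\infty$. The first thing I would establish is that $n(x)$ is a well-defined finite positive integer for \emph{every} $x\in X$, not merely almost every $x$. Indeed, writing $a_k(x)=\frac{1}{k}\sum_{j=0}^{k-1}f(T^j(x))$, the value $\overline{f}(x)$ is by definition $\limsup_{k}a_k(x)$, so for the fixed $\varepsilon>0$ there are infinitely many indices $k$ with $a_k(x)>\overline{f}(x)-\varepsilon$, i.e. with $\overline{f}(x)\le a_k(x)+\varepsilon$. Hence the set of integers defining $n(x)$ as a minimum is nonempty (in fact infinite), and $n(x)<+\infty$ for all $x$.

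Next I would read off the structure of the tail sets. Since $n(x)>k+1$ forces $n(x)>k$, the sets $A_{\varepsilon,k}=\{x\in X:\,n(x)>k\}$ form a decreasing nested family $A_{\varepsilon,1}\supseteq A_{\varepsilon,2}\supseteq\cdots$. Their intersection is $\bigcap_{k\in\mathbb{N}}A_{\varepsilon,k}=\{x\in X:\,n(x)=+\infty\}$, which is \emph{empty} by the finiteness of $n(x)$ just proved. Each $A_{\varepsilon,k}$ is measurable because $x\mapsto n(x)$ is measurable, the functions $a_k$ and $\overline{f}$ being measurable.

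Finally I would invoke continuity of the measure from above. Because $\mu(X)<+\infty$, every $A_{\varepsilon,k}$ has finite measure, so the decreasing-set continuity property of $\mu$ gives $\lim_{k\to\infty}\mu(A_{\varepsilon,k})=\mu\bigl(\bigcap_{k}A_{\varepsilon,k}\bigr)=\mu(\emptyset)=0$. Consequently, for the prescribed threshold $\tfrac{\varepsilon}{M}>0$ (recall $M>0$ was assumed after discarding the trivial case $M=0$), there exists a positive integer $N_\varepsilon$ with $\mu(A_{\varepsilon,N_\varepsilon})<\tfrac{\varepsilon}{M}$, as claimed. There is no genuine obstacle here; the only point requiring care, and the reason this argument belongs to the finite-measure subsection, is that continuity from above fails for sets of infinite measure. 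That is precisely why the $\sigma$-finite case will later need a different device, the Maximal Ergodic theorem for half-invariant measures announced in the introduction.
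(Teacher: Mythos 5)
Your proof is correct and follows essentially the same route as the paper: both rest on the finiteness of $n(x)$ for every $x$ (a consequence of the $\limsup$ definition of $\overline{f}$), the nesting of the tail sets, and continuity from above of the finite measure $\mu$. The paper merely phrases this as a contradiction (a nonempty intersection would contain a point with $n(b)=+\infty$), while you argue directly that $\mu(A_{\varepsilon,k})\to 0$; the content is identical.
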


\begin{proof}
If there were an $\varepsilon>0$ such that, for every $N\in \mathbb{N}$, we had $\mu(A_{\varepsilon,N_\varepsilon})\geq\frac{\varepsilon}{M}$, then, as
$$\{ x \in X: n(x)> N+1 \} \subseteq \{ x \in X: n(x)> N \},$$
by the Monotone Convergence theorem we would deduce (since $\mu(X)<\infty$) that the measurable set
$$B=\bigcap_{N=1}^\infty \,\, \{ x \in X: n(x)> N \}$$
would verify
$$\mu(B)=\underset{n\rightarrow \infty}{\lim} \,\mu (\{ x \in X: n(x)> N \})\geq \frac{\varepsilon}{M},$$
and so $B\neq \emptyset$ and any $b \in B$ would have $n(b)=+\infty$.
\end{proof}

\bigskip

This Lemma enables us to replace $f$ and $n$ by a map that has better tail-sets and bounded ${n}$.

\begin{definition}
$$\tilde{f}_{N_\varepsilon}(x)=\left\{ \begin{array}{cc}
f(x) & \text{ if } x \notin A_{\varepsilon,N_\varepsilon} \\
M & \text{ otherwise }
\end{array}
\right.
$$

\medskip

$$\tilde{n}(x)=\left\{ \begin{array}{cc}
n(x) & \text{ if } x \notin A_{\varepsilon,N_\varepsilon} \\
1 & \text{ otherwise }
\end{array}
\right.
$$
\end{definition}

\noindent Observe that $\tilde{n}(x)\leq N_\varepsilon$, for any $x$. And it is easy to deduce that

\begin{lemma}\cite{KW2}
For every $x$,
$$\ds\sum_{j=0}^{\tilde{n}(x)-1} \overline{f}(T^j(x)) \leq \ds\sum_{j=0}^{\tilde{n}(x)-1} \tilde{f}_{N_\varepsilon}(T^j(x))+\tilde{n}(x)\varepsilon.$$
\end{lemma}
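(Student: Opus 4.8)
The plan is to verify the inequality separately for the two types of points that enter the definitions of $\tilde{f}_{N_\varepsilon}$ and $\tilde{n}$, relying on two elementary observations. First, $\tilde{f}_{N_\varepsilon}$ dominates $f$ everywhere: off the tail-set the two functions agree, while on $A_{\varepsilon,N_\varepsilon}$ one has $\tilde{f}_{N_\varepsilon}=M=\sup f\geq f$. Second, since $\overline{f}$ is a limit of averages of a function bounded above by $M$, it satisfies $\overline{f}(x)\leq M$ for every $x$.

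For a point $x\notin A_{\varepsilon,N_\varepsilon}$ we have $\tilde{n}(x)=n(x)$, so the assertion is just the estimate already displayed after the definition of $n(x)$, namely
$$\ds\sum_{j=0}^{n(x)-1}\overline{f}(T^j(x))\leq \ds\sum_{j=0}^{n(x)-1} f(T^j(x))+n(x)\varepsilon,$$
which itself follows from the $T$-invariance of $\overline{f}$ (turning the left-hand sum into $n(x)\overline{f}(x)$) together with the minimality built into the definition of $n(x)$. It then suffices to replace $f$ by the larger function $\tilde{f}_{N_\varepsilon}$ in the middle sum, using the pointwise domination recorded above, to obtain the claimed bound with $\tilde{n}(x)$ and $\tilde{f}_{N_\varepsilon}$ in place of $n(x)$ and $f$.

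For a point $x\in A_{\varepsilon,N_\varepsilon}$ we have $\tilde{n}(x)=1$ and $\tilde{f}_{N_\varepsilon}(x)=M$, so both sides collapse to a single term and the inequality reduces to $\overline{f}(x)\leq M+\varepsilon$, which holds at once because $\overline{f}(x)\leq M$.

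I do not expect any genuine obstacle here: the construction of $\tilde{f}_{N_\varepsilon}$ and $\tilde{n}$ is engineered precisely so that on the slow-convergence set $A_{\varepsilon,N_\varepsilon}$ one forces $\tilde{n}=1$, never invoking the delicate orbit estimate there, while off that set the estimate survives at the harmless cost of the enlargement $f\leq\tilde{f}_{N_\varepsilon}$. The real content sits in the preceding Lemma controlling $\mu(A_{\varepsilon,N_\varepsilon})$; the present statement is the bookkeeping step that, once integrated against the half-invariant $\mu$, will convert this pointwise inequality into the bound needed to compare $\overline{f}$ with $f$.
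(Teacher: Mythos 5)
Your proof is correct and is exactly the standard argument from \cite{KW2} that the paper invokes without writing out: the case split on membership in $A_{\varepsilon,N_\varepsilon}$, the pointwise bound $f\leq\tilde{f}_{N_\varepsilon}$, and $\overline{f}\leq M$ are precisely what is needed. One cosmetic remark: in the case $x\notin A_{\varepsilon,N_\varepsilon}$ what you actually use is that $n(x)$ \emph{belongs} to the set being minimized (so the defining inequality holds for $k=n(x)$), not its minimality, which is only needed for the tail-set analysis.
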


\noindent Take then a positive integer $L_\varepsilon$ such that $\frac{N_\varepsilon \, M}{L_\varepsilon}<\varepsilon$ and apply the previous Lemma to the first summands while upper-bounding the others by $M$. We then get, for all $L\geq L_\varepsilon$,

\begin{lemma}\cite{KW2}
$$\ds\sum_{j=0}^{L-1} \overline{f}(T^j(x)) \leq \ds\sum_{j=0}^{L-1} \tilde{f}_{N_\varepsilon}(T^j(x))+L\varepsilon + (N_\varepsilon -1)M.$$
\end{lemma}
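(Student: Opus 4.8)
The plan is to tile the orbit segment $x, T(x),\dots,T^{L-1}(x)$ into consecutive blocks whose lengths are prescribed by $\tilde{n}$, apply the previous Lemma on each complete block, and absorb the single short leftover piece into the additive error term $(N_\varepsilon-1)M$. This is the "stopping-time'' decomposition that replaces, in the non-invariant setting, the covering arguments of the classical maximal/ergodic proofs.

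Concretely, I would build a finite increasing sequence of times by the rule $j_0=0$ and $j_{i+1}=j_i+\tilde{n}(T^{j_i}(x))$, iterating as long as $j_{i+1}\leq L$, and letting $j_p$ be the last time produced. Thus $0=j_0<j_1<\cdots<j_p\leq L$ while the next step overshoots, i.e. $j_p+\tilde{n}(T^{j_p}(x))>L$. Since the previous Lemma holds at \emph{every} point, I would apply it to the point $T^{j_i}(x)$, which carries its own value $\tilde{n}(T^{j_i}(x))=j_{i+1}-j_i$; after the obvious reindexing $j=j_i+k$ this controls the block sum on $[j_i,j_{i+1})$, with no invariance of $\tilde n$ required because both sides of that Lemma are already expressed along the orbit of the base point. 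Summing over $i=0,\dots,p-1$ telescopes the blocks into
\[
\sum_{j=0}^{j_p-1}\overline{f}(T^j(x))\leq \sum_{j=0}^{j_p-1}\tilde{f}_{N_\varepsilon}(T^j(x))+j_p\,\varepsilon ,
\]
and $j_p\leq L$ lets me replace $j_p\varepsilon$ by $L\varepsilon$.

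It then remains to handle the tail $[j_p,L)$. The crucial observation is that $\tilde{n}(T^{j_p}(x))\leq N_\varepsilon$ for every point, so the overshoot $j_p+\tilde{n}(T^{j_p}(x))>L$ forces $L-j_p\leq N_\varepsilon-1$: the leftover block is short. On it I bound each $\overline{f}(T^j(x))$ by $M$ (valid because $f\leq M$ makes every average, hence its upper limit $\overline{f}$, at most $M$), giving $\sum_{j=j_p}^{L-1}\overline{f}(T^j(x))\leq (N_\varepsilon-1)M$. On the right-hand side I only need $\tilde{f}_{N_\varepsilon}\geq 0$ together with $j_p\leq L$ to extend $\sum_{j=0}^{j_p-1}\tilde{f}_{N_\varepsilon}(T^j(x))$ up to $\sum_{j=0}^{L-1}\tilde{f}_{N_\varepsilon}(T^j(x))$. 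Adding the tiled part to the tail bound yields exactly the claimed inequality, in fact for every $L\geq 1$ (in particular for $L\geq L_\varepsilon$).

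The only delicate point is the boundary bookkeeping: one must check that the stopping rule produces full blocks entirely contained in $[0,L)$ and leaves precisely one remainder of length strictly less than $N_\varepsilon$, so that the previous Lemma is never applied to an incomplete block — everything else (the block-by-block application and the two one-sided bounds $\overline{f}\leq M$ and $\tilde{f}_{N_\varepsilon}\geq 0$) is routine. Note that the constant $L_\varepsilon$ with $N_\varepsilon M/L_\varepsilon<\varepsilon$ plays no role in the inequality itself; its purpose is to render the additive error $(N_\varepsilon-1)M$ negligible after one divides by $L$ in the next step, where $\tfrac{(N_\varepsilon-1)M}{L}\leq\tfrac{N_\varepsilon M}{L_\varepsilon}<\varepsilon$.
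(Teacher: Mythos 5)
Your proof is correct and is precisely the argument the paper has in mind: it cites the standard Katznelson--Weiss tiling, summarized there as ``apply the previous Lemma to the first summands while upper-bounding the others by $M$,'' which is exactly your stopping-time decomposition with the leftover block of length at most $N_\varepsilon-1$. Your added observation that the inequality holds for every $L\geq 1$, with $L_\varepsilon$ only mattering when one divides by $L$ afterwards, is also accurate.
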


\noindent Hence, dividing by $L$, this estimate yields
$$\frac{1}{L} \ds\sum_{j=0}^{L-1} \overline{f}(T^j(x)) \leq \frac{1}{L}\ds\sum_{j=0}^{L-1} \tilde{f}_{N_\varepsilon}(T^j(x))+2\varepsilon$$
and therefore, for any $x \in X$ and $L\geq L_\varepsilon$, we have
\begin{equation}
\overline{f}(x) \leq \frac{1}{L}\ds\sum_{j=0}^{L-1} \tilde{f}_{N_\varepsilon}(T^j(x))+2\varepsilon. \label{supremo}
\end{equation}
\medskip

We now repeat this argument with $\underline{f}$. For each $x \in X$, take
$$m(x)=\min\,\{k \in \mathbb{N}: \frac{1}{k}\ds\sum_{j=0}^{k-1} f(T^j(x)) \leq \underline{f}(x) + \varepsilon\}$$
select the tail-sets
$$C_{\varepsilon,k}=\{ x \in X: m(x)>k \}$$
and find a positive integer $J_\varepsilon$ such that $\mu(C_{\varepsilon,J_\varepsilon})<\frac{\varepsilon}{M}.$ Then replace $f$ and $m$ by
\begin{definition}

$$\check{f}_{J_\varepsilon}(x)=\left\{ \begin{array}{cc}
f(x) & \text{ if } x \notin C_{\varepsilon,J_\varepsilon} \\
0 & \text{ otherwise }
\end{array}
\right.
$$

\medskip

$$\tilde{m}(x)=\left\{ \begin{array}{cc}
m(x) & \text{ if } x \notin C_{\varepsilon,J_\varepsilon} \\
1 & \text{ otherwise }
\end{array}
\right.
$$

\end{definition}

\noindent As before, we may fix a positive integer $L^\prime_\varepsilon$ such that, for all $L\geq L^\prime_\varepsilon$ and all $x \in X$, we have
\begin{equation}
\underline{f}(x) \geq \frac{1}{L}\ds\sum_{j=0}^{L-1} \check{f}_{J_\varepsilon}(T^j(x))-2\varepsilon. \label{infimo}
\end{equation}

\noindent Nevertheless, as $\tilde{f}$ does not, in general, coincide with $\check{f}$, we cannot use the estimates (\ref{supremo}) and (\ref{infimo}) to conclude that $\underline{f}\equiv \overline{f}$. But we may integrate these two inequalities, for $L\geq \max \, \{L_\varepsilon, L^\prime_\varepsilon\}$, taking into account that

\begin{lemma}\label{integrais}
For any $j \in \mathbb{N}_0$,
$$\ds\int\, \tilde{f}_{N_\varepsilon}\circ T^j \, d\mu \leq \ds\int\, f \circ T^j \, d\mu + \varepsilon.$$
\end{lemma}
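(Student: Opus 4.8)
The plan is to reduce the statement to a single pointwise comparison between $\tilde{f}_{N_\varepsilon}$ and $f$, and then to transport that comparison forward along $T^j$, controlling the measure of the relevant preimage by iterating half-invariance. Recall that $\tilde{f}_{N_\varepsilon}$ agrees with $f$ off the tail-set $A_{\varepsilon,N_\varepsilon}$ and equals the constant $M=\sup f$ on it; since $0\leq f\leq M$ everywhere, the difference $\tilde{f}_{N_\varepsilon}-f=(M-f)\,\chi_{A_{\varepsilon,N_\varepsilon}}$ is non-negative and bounded above by $M\,\chi_{A_{\varepsilon,N_\varepsilon}}$.

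First I would compose this inequality with $T^j$. Because $\chi_{A_{\varepsilon,N_\varepsilon}}\circ T^j=\chi_{T^{-j}(A_{\varepsilon,N_\varepsilon})}$ and $f\circ T^j\geq 0$, we obtain the pointwise bound
$$0\leq \tilde{f}_{N_\varepsilon}\circ T^j-f\circ T^j\leq M\,\chi_{T^{-j}(A_{\varepsilon,N_\varepsilon})}.$$
Integrating gives
$$\ds\int \tilde{f}_{N_\varepsilon}\circ T^j\,d\mu-\ds\int f\circ T^j\,d\mu\leq M\,\mu\bigl(T^{-j}(A_{\varepsilon,N_\varepsilon})\bigr),$$
so it only remains to check that the right-hand side does not exceed $\varepsilon$.

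The key point, and the sole place where the hypothesis on $\mu$ intervenes, is the estimate on $\mu(T^{-j}(A_{\varepsilon,N_\varepsilon}))$. Here I would iterate half-invariance: writing $T^{-j}(B)=T^{-1}(T^{-(j-1)}(B))$ and applying $\mu(T^{-1}(C))\leq\mu(C)$ repeatedly yields, by induction on $j$, that $\mu(T^{-j}(B))\leq\mu(B)$ for every $B\in\mathfrak{B}$ and every $j\in\mathbb{N}_0$. Taking $B=A_{\varepsilon,N_\varepsilon}$ and invoking the earlier Lemma, which provides $\mu(A_{\varepsilon,N_\varepsilon})<\varepsilon/M$, we conclude that $M\,\mu(T^{-j}(A_{\varepsilon,N_\varepsilon}))\leq M\,\mu(A_{\varepsilon,N_\varepsilon})<\varepsilon$, which finishes the proof.

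There is no serious obstacle here: the argument is a one-line pointwise bound combined with a monotonicity property of $\mu$ under preimages. The one subtlety worth emphasizing is that, in contrast to the invariant case where $\mu(T^{-j}(B))=\mu(B)$, we now have only the inequality $\mu(T^{-j}(B))\leq\mu(B)$; but since the term in question appears as an error to be bounded from above, this inequality is exactly what the estimate requires, and it is precisely what half-invariance supplies.
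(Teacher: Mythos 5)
Your proposal is correct and follows essentially the same route as the paper: both arguments come down to bounding the discrepancy between $\tilde{f}_{N_\varepsilon}\circ T^j$ and $f\circ T^j$ by $M$ on the set $T^{-j}(A_{\varepsilon,N_\varepsilon})$ (the paper splits the domain of integration, you write the same fact as a pointwise inequality for the difference) and then invoking the iterated half-invariance estimate $\mu(T^{-j}(A_{\varepsilon,N_\varepsilon}))\leq\mu(A_{\varepsilon,N_\varepsilon})<\varepsilon/M$ together with the tail-set lemma. Your explicit induction justifying $\mu(T^{-j}(B))\leq\mu(B)$ is a welcome detail that the paper leaves implicit.
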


\begin{proof} Given such a $j$, as $\mu(T^{-j}(A_{\varepsilon,N_\varepsilon})) \leq \mu(A_{\varepsilon,N_\varepsilon}) <\frac{\varepsilon}{M}$,
\begin{eqnarray*}
\ds\int\, \tilde{f}_{N_\varepsilon}\circ T^j \, d\mu &=& \ds\int_{X\setminus T^{-j}(A_{\varepsilon,N_\varepsilon})}\, \tilde{f}_{N_\varepsilon}\circ T^j \, d\mu + \ds\int_{T^{-j}(A_{\varepsilon,N_\varepsilon})}\, \tilde{f}_{N_\varepsilon}\circ T^j \, d\mu \\
 &\leq& \ds\int_{X\setminus T^{-j}(A_{\varepsilon,N_\varepsilon})}\, f\circ T^j \, d\mu + M \, \mu\left(T^{-j}(A_{\varepsilon,N_\varepsilon})\right) \\
 &\leq& \ds\int\, f \circ T^j \, d\mu \,+ \varepsilon.
\end{eqnarray*}
\end{proof}

\bigskip

Thus, from (\ref{supremo}), we deduce that
$$\ds\int\, \overline{f}\, d\mu \leq \frac{1}{L}\ds\sum_{j=0}^{L-1} \ds\int \, \tilde{f}_{N_\varepsilon}\circ T^j+2\,\varepsilon \,\mu(X)$$
and, with Lemma \ref{integrais}, that
$$\ds\int\, \overline{f}\, d\mu \leq \frac{1}{L}\ds\sum_{j=0}^{L-1} \ds\int f\circ T^j+2\,\varepsilon \,\mu(X) + \varepsilon.$$
So, as $\varepsilon$ is arbitrary, we get
\begin{equation}
\ds\int\, \overline{f}\, d\mu \leq \liminf_{n \rightarrow +\infty}\,\frac{1}{n}\ds\sum_{j=0}^{n-1} \ds\int \, f\circ T^j \, d\mu.  \label{overline}
\end{equation}
Similarly, given $j$, we have
\begin{eqnarray*}
\ds\int\, \check{f}_{J_\varepsilon}\circ T^j \, d\mu &=& \ds\int_{X\setminus T^{-j}(C_{\varepsilon,J_\varepsilon})}\, \check{f}_{J_\varepsilon}\circ T^j \, d\mu + \ds\int_{T^{-j}(C_{\varepsilon,J_\varepsilon})}\, \check{f}_{J_\varepsilon}\circ T^j \, d\mu \\
&=& \ds\int_{X\setminus T^{-j}(C_{\varepsilon,J_\varepsilon})}\, f\circ T^j \, d\mu \\
&\geq& \ds\int \, f\circ T^j \, d\mu - M \, \mu\left(T^{-j}(C_{\varepsilon,J_\varepsilon})\right)\\
&\geq& \ds\int \, f\circ T^j \, d\mu \,- \varepsilon
\end{eqnarray*}
so, from (\ref{infimo}), we conclude that
\begin{eqnarray*}
\ds\int\, \underline{f}\, d\mu &\geq& \frac{1}{L}\ds\sum_{j=0}^{L-1} \ds\int \, \check{f}_{J_\varepsilon}\circ T^j-2\,\varepsilon \,\mu(X) \\
&\geq& \frac{1}{L}\ds\sum_{j=0}^{L-1} \ds\int \, f\circ T^j - 2\,\varepsilon \,\mu(X) - \varepsilon.
\end{eqnarray*}
Again, as $\varepsilon$ is arbitrary,
\begin{equation}
\ds\int\, \underline{f}\, d\mu \geq \limsup_{n \rightarrow +\infty}\,\frac{1}{n}\ds\sum_{j=0}^{n-1} \ds\int \, f\circ T^j \, d\mu.  \label{underline}
\end{equation}
Finally, the inequalities (\ref{overline}) and (\ref{underline}) yield
$$\ds\int\, \underline{f} \geq \ds\int\, \overline{f}$$
which, as $\underline{f}\leq\overline{f}$, implies that
$$\underline{f}(x)=\overline{f}(x) \,\,\, \mu \text{ almost every } x.$$

\bigskip

\noindent (d) Moreover, (\ref{overline}) and (\ref{underline}) also ensure that
$$\ds\int\, f^* \, d\mu = \lim_{n \rightarrow +\infty}~\,\frac{1}{n}\ds\sum_{j=0}^{n-1} \ds\int \, f\circ T^j \, d\mu$$
and so, as $\mu$ is half-invariant,
$$\ds\int\, f^* \, d\mu \leq \ds\int \, f \, d\mu.$$

\bigskip

\noindent (e) If $\mu$ is a finite half-invariant measure, then, given a measurable $B$,
\begin{eqnarray*}
\mu(T^{-1}(B) & = & \mu\left(T^{-1}\left(X\setminus(X\setminus B)\right)\right) \\
& = & \mu\left(T^{-1}(X)\setminus T^{-1}(X\setminus B)\right)\\
& = & \mu\left(X\setminus T^{-1}(X\setminus B)\right)\\
& = & \mu(X)- \mu\left(T^{-1}(X\setminus B)\right)\\
& \geq & \mu(X) - \mu\left(X\setminus B\right)\\
& = & \mu\left(B)\right)
\end{eqnarray*}
thus $\mu$ is $T$-invariant.
\end{proof}

\bigskip

\begin{corollary}
Let $(X,\mathfrak{B}, \mu)$ be a finite measure space, $T:X \rightarrow X$ a measurable transformation and $(\lambda_j)_{j \in \mathbb{N}}$ a sequence of non-negative real numbers whose arithmetical averages are upperbounded by some $\lambda > 0$. If, for each $B \in \mathfrak{B}$ and each $j\in \mathbb{N}$, we have
$$\mu(T^{-j}(B)\leq \lambda_j\mu(B),$$
then, for any non-negative $f \in \mathcal{L}^\infty(X,\mu)$,
\begin{itemize}
\item[(i)] $f^\ast(x) = \underset{n\rightarrow{\infty}}{\text{lim}} \, \frac{1}{n} \ds\sum_{j=0}^{n-1} f (T^j(x))$ exists for $\mu$ almost every point $x$.
\item[(2i)] $\ds\int f^{*}\, d\mu = \underset{n\rightarrow \infty}{\lim}\,\frac{1}{n}\ds\sum_{j=0}^{n-1} \ds\int f\circ T^j\, d\mu \leq \lambda \, \ds\int f\, d\mu.$
\end{itemize}
\end{corollary}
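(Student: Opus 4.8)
The plan is to re-run the finite-measure case of Theorem \ref{maintheorem} essentially verbatim, isolating the single point at which half-invariance was used and checking that the weaker comparison $\mu(T^{-j}(B)) \leq \lambda_j \mu(B)$ suffices, provided one only ever exploits the $\lambda_j$ through their Cesàro averages. We are again on a finite measure space with $f \in \mathcal{L}^\infty$, so all integrability and dominated-convergence issues are immediate: writing $M=\sup\{f(x):x\in X\}$, the averages $\frac{1}{n}\sum_{j=0}^{n-1} f\circ T^j$ lie in $[0,M]$, hence so do $\overline{f}$ and $\underline{f}$, and bounded functions on a finite measure space are integrable. First I would record that the hypothesis, stated for characteristic functions, upgrades by linearity and the Monotone Convergence theorem to
$$\int g\circ T^j\, d\mu \leq \lambda_j \int g\, d\mu \qquad \text{for every non-negative measurable } g.$$

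Next I would observe that the whole pointwise apparatus of the proof transfers with no change, since it never touches the measure: the definitions of $\overline{f}$ and $\underline{f}$, the tail-sets $A_{\varepsilon,k}$ and $C_{\varepsilon,k}$, the selection of $N_\varepsilon$ and $J_\varepsilon$ (which rests only on finiteness of $\mu$ and the Monotone Convergence theorem), the truncations $\tilde{f}_{N_\varepsilon}$ and $\check{f}_{J_\varepsilon}$, and the resulting pointwise inequalities (\ref{supremo}) and (\ref{infimo}) are all reproduced unchanged.

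The measure hypothesis intervenes only in the integral comparison of Lemma \ref{integrais} and its $\check{f}$-counterpart. Here I would repeat the computation with $\mu(T^{-j}(A_{\varepsilon,N_\varepsilon})) \leq \lambda_j\, \mu(A_{\varepsilon,N_\varepsilon}) < \lambda_j\, \varepsilon/M$ replacing the half-invariant bound, obtaining the per-index estimates
$$\int \tilde{f}_{N_\varepsilon}\circ T^j\, d\mu \leq \int f\circ T^j\, d\mu + \lambda_j\,\varepsilon, \qquad \int \check{f}_{J_\varepsilon}\circ T^j\, d\mu \geq \int f\circ T^j\, d\mu - \lambda_j\,\varepsilon.$$
Crucially, these errors feed into the argument only after one integrates (\ref{supremo}) and (\ref{infimo}) and divides by $L$, so what actually surfaces is the aggregate $\frac{1}{L}\sum_{j=0}^{L-1}\lambda_j\,\varepsilon \leq \lambda\,\varepsilon$. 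Letting $\varepsilon \to 0$ then yields, exactly as in (\ref{overline}) and (\ref{underline}),
$$\int \overline{f}\, d\mu \leq \liminf_{n}\frac{1}{n}\sum_{j=0}^{n-1}\int f\circ T^j\, d\mu, \qquad \int \underline{f}\, d\mu \geq \limsup_{n}\frac{1}{n}\sum_{j=0}^{n-1}\int f\circ T^j\, d\mu;$$
since $\underline{f}\leq\overline{f}$ this forces $\underline{f}=\overline{f}$ $\mu$-almost everywhere, giving (i), and identifies $\int f^*\, d\mu = \lim_n \frac{1}{n}\sum_{j=0}^{n-1}\int f\circ T^j\, d\mu$. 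Finally $\frac{1}{n}\sum_{j=0}^{n-1}\int f\circ T^j\, d\mu \leq \big(\frac{1}{n}\sum_{j=0}^{n-1}\lambda_j\big)\int f\, d\mu \leq \lambda \int f\, d\mu$ delivers (2i).

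I expect the only genuine difficulty to be a bookkeeping one: the $\lambda_j$ must never be estimated individually, because a single $\lambda_j$ may be arbitrarily large while the hypothesis controls only their averages. The argument is organized so that every occurrence of a $\lambda_j$ is confined to a Cesàro sum $\frac{1}{L}\sum_{j=0}^{L-1}\lambda_j$, where the bound $\lambda$ applies; this is precisely why one must keep each truncation error attached to its index $T^j$ until after the averaging step, rather than bounding it termwise.
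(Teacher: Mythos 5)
Your proposal is correct and follows exactly the route the paper intends: the Corollary is stated without a separate proof precisely because the finite-measure argument carries over once the bound $\mu(T^{-j}(B))\leq\mu(B)$ in Lemma \ref{integrais} (and its $\check{f}$-counterpart) is replaced by $\mu(T^{-j}(B))\leq\lambda_j\mu(B)$, and you have correctly identified that these are the only places the measure hypothesis enters. Your observation that the $\lambda_j$ must only ever be estimated through their Ces\`aro averages is exactly the point that makes the weaker hypothesis sufficient.
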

\end{subsection}

\bigskip

\begin{subsection}{Second case: $\mu(X)=+\infty$}
\begin{proof}

\noindent (a) Let $(X_n)_{n \in \mathbb{N}}$ be an increasing sequence of measurable sets such that $\mu(X_n) < +\infty$ for all $n$ and $X=\bigcup^{\infty}_{n=1} \, X_n$. Consider $\alpha < \beta$ and the set
$$Y_{\alpha,\beta}=\{x \in X: \underline{f}(x)<\alpha<\beta< \overline{f}(x)\}.$$
If, for all $\alpha$ and $\beta$, we have $\mu(Y_{\alpha,\beta})=0$, then we may conclude the pointwise convergence from the equality
$$\mu(\{x \in X: \underline{f}(x)\neq \overline{f}(x)\})=\mu(\bigcup_{\alpha, \beta \,\in\, \mathbb{Q}:\,\, \alpha<\beta}\,Y_{\alpha,\beta})=0.$$
The set $Y_{\alpha,\beta}$ is measurable and $T^{-1}(Y_{\alpha,\beta})=Y_{\alpha,\beta}$. To verify that $\mu(Y_{\alpha,\beta})=0$, we will check how big are the subsets of $Y_{\alpha,\beta}$ with finite measure (which exist since $\mu$ is $\sigma$-finite).

\begin{proposition}\label{subsets}
Fix a pair $\alpha < \beta$ and assume that $\beta>0$.\footnote{Otherwise, $\alpha <0$ and we may take $-f$, $-\alpha$ and $-\beta$ instead.} Then:
\begin{itemize}
\item[(i)] $C\subseteq Y_{\alpha,\beta} \text{ and } \mu(C)<+\infty \Rightarrow \mu(C)\leq \frac{1}{\beta} \ds\int\, |f| \, d\mu$.
\item[(2i)] $\mu(Y_{\alpha,\beta})=0.$
\end{itemize}
\end{proposition}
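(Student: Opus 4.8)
The plan is to establish (i) by a maximal-ergodic-theorem argument and then deduce (2i) from (i) by exhausting $Y_{\alpha,\beta}$ with finite-measure sets and invoking the finite case already settled in the previous subsection.

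For (i), I would use the observation recorded in the introduction that half-invariance is precisely the statement that $\mathcal{U}_T\colon f\mapsto f\circ T$ is a positive contraction of $\mathcal{L}^1(X,\mu)$; this makes the maximal ergodic theorem (Hopf's, for positive $\mathcal{L}^1$-contractions) available, and alternatively one could prove the single maximal inequality needed by hand. The device that circumvents the infiniteness of $\mu$ is to localise. Given $C\subseteq Y_{\alpha,\beta}$ with $\mu(C)<+\infty$, I would set $h=f-\beta\,\chi_C$, which lies in $\mathcal{L}^1(X,\mu)$ since $f\in\mathcal{L}^1$ and $\mu(C)<+\infty$. Writing $S_N h=\sum_{j=0}^{N-1}\mathcal{U}_T^{\,j}h$, I would first check that $C$ sits inside the maximal set $E=\{x:\sup_{N\ge 1}S_N h(x)>0\}$: for $x\in C\subseteq Y_{\alpha,\beta}$ one has $\overline{f}(x)>\beta$, so $\frac1N\sum_{j=0}^{N-1}f(T^j x)>\beta$ for infinitely many $N$, and since $\beta>0$ and $\sum_{j=0}^{N-1}\chi_C(T^j x)\le N$ this forces $S_N h(x)=\sum_{j=0}^{N-1}f(T^j x)-\beta\sum_{j=0}^{N-1}\chi_C(T^j x)>0$ for those $N$. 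The maximal ergodic theorem then gives $\int_E h\,d\mu\ge 0$, and because $C\subseteq E$ this reads $0\le\int_E f\,d\mu-\beta\,\mu(C)$, whence $\beta\,\mu(C)\le\int_E f\,d\mu\le\int f\,d\mu=\int|f|\,d\mu$, which is (i).

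For (2i), the key point is that the estimate in (i) is uniform over all finite-measure subsets of $Y_{\alpha,\beta}$. Using the $\sigma$-finiteness of $\mu$, I would take the increasing exhaustion $C_n=Y_{\alpha,\beta}\cap X_n$, so that $\mu(C_n)<+\infty$ and $\bigcup_n C_n=Y_{\alpha,\beta}$; continuity of measure together with (i) yields $\mu(Y_{\alpha,\beta})=\lim_n\mu(C_n)\le\frac1\beta\int|f|\,d\mu<+\infty$. Thus $Y_{\alpha,\beta}$ is a $T$-invariant set (recall $T^{-1}(Y_{\alpha,\beta})=Y_{\alpha,\beta}$, so $T$ maps it into itself) of \emph{finite} measure on which $\mu$ restricts to a finite half-invariant measure. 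Applying the finite case to the subsystem $(Y_{\alpha,\beta},\,T|_{Y_{\alpha,\beta}},\,\mu|_{Y_{\alpha,\beta}})$ — where the Birkhoff limits agree with $\overline{f}$ and $\underline{f}$ because orbits never leave the invariant set — gives the inequalities $\int_{Y_{\alpha,\beta}}\overline{f}\,d\mu\le\int_{Y_{\alpha,\beta}}\underline{f}\,d\mu$. Since $\overline{f}\ge\underline{f}$ everywhere while $\overline{f}-\underline{f}>\beta-\alpha>0$ on $Y_{\alpha,\beta}$ by construction, this is compatible only with $\mu(Y_{\alpha,\beta})=0$.

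The main obstacle is the non-finiteness of $\mu$, which forbids the naive move of integrating $f-\beta$ over the invariant set $\{\overline{f}>\beta\}$, both because the constant $\beta$ is not integrable and because that set may have infinite measure. Replacing $\beta$ by $\beta\,\chi_C$ is the crux: it makes the subtracted function integrable while the maximal set still absorbs all of $C$, so the maximal ergodic theorem applies and produces an estimate uniform in $C$. Once this uniform bound of (i) is secured, the passage to (2i) via exhaustion and the finite case is routine.
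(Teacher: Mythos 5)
Your proposal is correct and follows essentially the same route as the paper: part (i) via Hopf's maximal ergodic theorem applied to the localised function $f-\beta\,\chi_C$ (the paper checks the slightly stronger containment $Y_{\alpha,\beta}\subseteq\{\widehat{f-\beta\chi_C}>0\}$, but only $C\subseteq\{\cdot\}$ is used, exactly as you do), and part (2i) by exhausting $Y_{\alpha,\beta}$ with the sets $Y_{\alpha,\beta}\cap X_n$ to get finite measure and then applying the finite case to the $T$-invariant subsystem to force a contradiction. The only cosmetic difference is that you phrase the final contradiction through the integral inequality $\int\overline{f}\le\int\underline{f}$ rather than through the failure of pointwise convergence on $Y_{\alpha,\beta}$; both are the same application of the finite case.
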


\medskip

\begin{proof}

\noindent (i) This is a consequence of Hopf's Maximal Ergodic theorem, adapted to half-invariant measures.

\begin{lemma}{\cite{H, Ga}}\label{MET}
Let $\mu$ be a $\sigma$-finite measure and $\mathcal{U}:\mathcal{L}^1(X,\mu) \rightarrow \mathcal{L}^1(X,\mu)$ a linear operator which is positive ($g\geq 0 \Rightarrow \mathcal{U}(g) \geq 0$) and contractive ($\forall \, g \in \mathcal{L}^1(X,\mu) \, \, \|\mathcal{U}(g)\|_1 \leq \|g\|_1$). Then
$$ \forall \, f \in \mathcal{L}^1(X,\mu) \, \,\,\, \ds\int_{\{\hat{f}\,>\,0\}} \, f d\mu \geq 0$$
where $\hat{f}= \sup_{n \geq 1} \, \frac{1}{n} \ds\sum^{n-1}_{j=0} \mathcal{U}^j(f)$ and $\{\hat{f}>0\}=\{x \in X : \hat{f}(x)>0\}$.
\end{lemma}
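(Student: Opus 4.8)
The plan is to follow Garsia's streamlined proof of the maximal ergodic theorem, which isolates the roles of positivity and contractivity into two clean steps and never invokes invariance of the measure.

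First I would replace the supremum of averages by a supremum of finitely many partial sums. Writing $S_n f = \ds\sum_{j=0}^{n-1} \mathcal{U}^j(f)$ (with $S_0 f = 0$) and $M_N f = \max_{1 \leq n \leq N} S_n f$, the sign of $\frac{1}{n} S_n f$ agrees with that of $S_n f$, so $\{\hat{f} > 0\} = \bigcup_{N \geq 1} \{M_N f > 0\}$ is an increasing union. Since $f \in \mathcal{L}^1(X,\mu)$, the Dominated Convergence theorem (dominating by $|f|$) reduces the claim to showing, for each fixed $N$, that $\ds\int_{\{M_N f > 0\}} f \, d\mu \geq 0$.

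The core of the argument is an inequality that exploits positivity. Set $M_N^+ f = \max\{0, M_N f\}$, which lies in $\mathcal{L}^1(X,\mu)$ because it is dominated by $\ds\sum_{n=1}^N |S_n f|$. From the recursion $S_n f = f + \mathcal{U}(S_{n-1} f)$ and the bound $S_{n-1} f \leq M_N^+ f$ (valid for $1 \leq n \leq N$, using $S_0 f = 0 \leq M_N^+ f$ at the boundary), positivity of $\mathcal{U}$ yields $S_n f \leq f + \mathcal{U}(M_N^+ f)$ for every such $n$, hence $M_N f \leq f + \mathcal{U}(M_N^+ f)$. On the set $\{M_N f > 0\}$ one has $M_N^+ f = M_N f$, so there $f \geq M_N^+ f - \mathcal{U}(M_N^+ f)$.

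Integrating this last inequality over $\{M_N f > 0\}$ finishes the proof. Since $M_N^+ f$ vanishes off $\{M_N f > 0\}$, its integral over that set equals its integral over $X$; and since $\mathcal{U}(M_N^+ f) \geq 0$, dropping the set constraint in the subtracted term only weakens the bound. Thus $\ds\int_{\{M_N f > 0\}} f \, d\mu \geq \ds\int_X M_N^+ f \, d\mu - \ds\int_X \mathcal{U}(M_N^+ f) \, d\mu$, and contractivity (applied to the non-negative $M_N^+ f$) makes the right-hand side non-negative. The step I expect to be the main obstacle is Garsia's inequality $M_N f \leq f + \mathcal{U}(M_N^+ f)$: it is exactly where positivity is indispensable, and one must handle the telescoping $S_n f = f + \mathcal{U}(S_{n-1} f)$ carefully so that the truncation to $M_N^+ f$ is legitimate across all indices, including the boundary case $n = 1$.
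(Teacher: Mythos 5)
Your proof is correct: it is precisely Garsia's argument for Hopf's maximal ergodic theorem, with the reduction to $\ds\int_{\{M_N f>0\}} f\, d\mu \geq 0$, the inequality $M_N f \leq f + \mathcal{U}(M_N^+ f)$ obtained from positivity, and contractivity applied to the non-negative function $M_N^+ f$ all handled properly (including the boundary case $S_0 f = 0$). The paper gives no proof of this lemma at all --- it is stated as a known result with a citation (Garsia's note being one of the intended sources) --- so your write-up simply supplies the standard argument the authors defer to.
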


\medskip

\noindent Consider one such a subset $C$, the map $f-\beta \, \chi_C$, which is in $\mathcal{L}^1(X,\mu)$ since $\mu(C)<+\infty$, and the operator $\mathcal{U}:g \mapsto g\circ T$. Applying Lemma \ref{MET}, we conclude that
$$\ds\int_{\left\{\widehat{f-\beta \, \chi_C}\,\,>0\right\}} \, \, (f-\beta \, \chi_C) \,\, d\mu \geq 0.$$
Moreover,
\begin{lemma}
$Y_{\alpha,\beta}\subseteq \left\{\widehat{f-\beta \, \chi_C}>0\right\}.$
\end{lemma}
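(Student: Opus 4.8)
The plan is to establish the inclusion pointwise, relying only on the definition of $\overline{f}$ together with the trivial bound $\chi_C \leq 1$; in particular, neither the finiteness of $\mu(C)$ nor the inclusion $C \subseteq Y_{\alpha,\beta}$ is needed for this step (these enter only afterwards, when the lemma is fed into the Maximal Ergodic theorem). First I would fix a point $x \in Y_{\alpha,\beta}$. By definition of this set, $\overline{f}(x) > \beta$, and since
$$\overline{f}(x) = \underset{n\rightarrow \infty}{\limsup}\,\frac{1}{n}\ds\sum_{j=0}^{n-1} f(T^j(x)),$$
there must exist infinitely many integers $n$ — and in particular at least one — for which $\frac{1}{n}\ds\sum_{j=0}^{n-1} f(T^j(x)) > \beta$.

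The key step is to compare the Birkhoff sums of the modified test map $f - \beta\,\chi_C$ with those of $f$. Because $0 \leq \chi_C \leq 1$ and $\beta > 0$, every average of $\beta\,\chi_C$ along the orbit of $x$ is bounded above by $\beta$, that is, $\frac{1}{n}\ds\sum_{j=0}^{n-1} \beta\,\chi_C(T^j(x)) \leq \beta$ for all $n$. Hence, selecting an $n$ as above, I would write
$$\frac{1}{n}\ds\sum_{j=0}^{n-1} (f - \beta\,\chi_C)(T^j(x)) = \frac{1}{n}\ds\sum_{j=0}^{n-1} f(T^j(x)) - \frac{1}{n}\ds\sum_{j=0}^{n-1} \beta\,\chi_C(T^j(x)) > \beta - \beta = 0.$$
Since $\widehat{f - \beta\,\chi_C}(x)$ is, by definition, the supremum over all $n \geq 1$ of precisely these averages, it is bounded below by the strictly positive quantity just produced, and therefore $x \in \left\{\widehat{f - \beta\,\chi_C} > 0\right\}$. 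As $x$ was an arbitrary element of $Y_{\alpha,\beta}$, the asserted inclusion follows.

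I expect no genuine obstacle here: the argument is a direct consequence of the definitions. The two points deserving attention are that I only need a single index $n$ to force the supremum defining $\widehat{f - \beta\,\chi_C}$ to be positive, rather than having to control the full $\limsup$, and that the hypothesis $\beta > 0$ (available after the reduction recorded in the footnote to the Proposition) is exactly what converts the pointwise bound $\chi_C \leq 1$ into the uniform bound $\beta$ on the subtracted average while preserving the direction of the inequality.
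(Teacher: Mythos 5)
Your argument is correct and is essentially the paper's own proof: pick one $n$ with $\frac{1}{n}\sum_{j=0}^{n-1} f(T^j(x))>\beta$ (available since $\overline{f}(x)>\beta$), use $\beta\,\chi_C\leq\beta$ pointwise (valid because $\beta>0$) to get a positive average for $f-\beta\,\chi_C$, and conclude that the supremum defining $\widehat{f-\beta\,\chi_C}(x)$ is positive. Your version is if anything slightly cleaner, since you keep the inequality strict throughout where the paper ends its chain with a non-strict $\geq 0$.
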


\begin{proof}
Take $x \in Y_{\alpha,\beta}$. As $\beta < \overline{f}(x)$, at least one (in fact infinitely many) averages $\frac{1}{n}\ds\sum_{j=0}^{n-1} f(T^j(x))$ are strictly bigger than $\beta$. So, for at least one $n$, we have
$$\ds\sum_{j=0}^{n-1} \left(f(T^j(x))-\beta\,\chi_C(T^j(x))\right)\geq \ds\sum_{j=0}^{n-1} \left(f(T^j(x))-\beta\right)=\left(\ds\sum_{j=0}^{n-1} f(T^j(x))\right)-n\beta \geq 0.$$
\end{proof}

\bigskip

Therefore
\begin{eqnarray*}
0 & \leq & \ds\int_{\left\{\widehat{f-\beta \, \chi_C}\,\,>0\right\}} \, \, (f-\beta \, \chi_C) \, d\mu \\
&=& \ds\int_{\left\{\widehat{f-\beta \, \chi_C}\,\,>0\right\}} \, \, f \, d\mu -\beta \mu(C) \\
& \leq & \ds\int \, |f| \, d\mu -\beta \mu(C).
\end{eqnarray*}

\bigskip

\noindent (2i) Firstly, from (i) and the Monotone Convergence theorem, we obtain
$$\mu(Y_{\alpha,\beta})= \lim_{n \rightarrow +\infty} \, \mu(Y_{\alpha,\beta}\cap X_n) \leq \ds\int\, |f| \, d\mu < +\infty.$$
Then, as $Y_{\alpha,\beta}$ is $T$-invariant, if $\mu(Y_{\alpha,\beta})$ were positive, we might restrict the dynamics to $Y_{\alpha,\beta}$ and apply the first part of this proof to $T_{|_{Y_{\alpha,\beta}}}$, the measure $\nu=\frac{\mu}{\mu(Y_{\alpha,\beta})}$, which is half-invariant as well, and any $g \in \mathcal{L}^{\infty}(Y_{\alpha,\beta},\nu)$. But for $g=f_{|_{Y_{\alpha,\beta}}}$, the pointwise convergence of the time averages fails on all points of $Y_{\alpha,\beta}$. Hence $\mu(Y_{\alpha,\beta})=0$.
\end{proof}

\bigskip

\noindent (c) As the maps $F_n= \frac{1}{n} \ds\sum_{j=0}^{n-1} f (T^j)$ are non-negative and $\ds\int \, |F_n| \, d\mu \leq \ds\int \, |f| \, d\mu \leq+\infty$, we have
$\underset{n\rightarrow \infty}{\liminf} \,\ds\int \, F_n \, d\mu <+\infty$ and so, by Fatou Lemma,
$$\ds\int \, \underset{n\rightarrow \infty}{\liminf} \,F_n \, d\mu \leq \underset{n\rightarrow \infty}{\liminf} \,\ds\int \, F_n \, d\mu $$
that is,
$$\ds\int \, f^* \, d\mu \leq \underset{n\rightarrow \infty}{\liminf}\, \ds\int \, \frac{1}{n} \ds\sum_{j=0}^{n-1} f \circ T^j d\mu.$$
Since $\mu$ is half-invariant and $f\geq 0$, we also have
$$ \forall \,\, j \in \mathbb{N}\,\,\,\,\,\,\,\ds\int \, f\circ T^j \, d\mu \leq \ds\int \, f \, d\mu$$
and so
$$\ds\int \, f^* \, d\mu \leq \ds\int \, f \, d\mu.$$
\end{proof}
\end{subsection}
\end{section}

\medskip

\bigskip

\flushleft
\emph{Maria Carvalho} \ \  (mpcarval@fc.up.pt)\\
\emph{Fernando Moreira} \ \  (fsmoreir@fc.up.pt)\\
CMUP and Departamento de Matem\'atica \\
Rua do Campo Alegre, 687 \\ 4169-007 Porto \\ Portugal\\

\end{document}